
\documentclass[12pt,reqno]{amsart}
\usepackage{url}
\usepackage{amssymb}
\let\oldlabel=\label
\def\prellabel{\marginparsep=1em\marginparwidth=44pt
  \def\label##1{\oldlabel{##1}\ifmmode\else\ifinner\else
         \marginpar{{\footnotesize\ \\ \tt
                    ##1}}\fi\fi}}
%
%
\def\NZQ{\mathbb}               
\def\NN{{\NZQ N }}

\def\opn#1#2{\def#1{\operatorname{#2}}} 
%
\opn\chara{char}
\opn\rank{rank}
\opn\hilb{Hilb}
\opn\gr{gr}
\opn\Rees{{\mathcal R}}
%
%

%
%

\usepackage[latin1]{inputenc}
\usepackage{mathptmx}
\newtheorem{theorem}{Theorem}[section]
\newtheorem{lemma}[theorem]{Lemma}

\theoremstyle{definition}
\newtheorem{remark}[theorem]{Remark}

\newtheorem{remark/example}[theorem]{Remark/Example}

\newtheorem*{theorem*}{Theorem}

%
%
%
%
\let\epsilon=\varepsilon
\let\phi=\varphi
\let\kappa=\varkappa

\opn\ini{in}
\opn\KRS{KRS}
\opn\krs{krs}
\opn\Krs{Krs}
\opn\DEL{DEL}
\opn\diag{diag}
\opn\Ker{Ker}
\opn\Image{Im}
\opn\DD{{\mathcal D}}
\opn\SS{{\mathcal S}}
\opn\MM{{\mathcal M}}
\opn\GL{GL}
\opn{\hht}{ht}
\opn\Cl{Cl}
\opn\cl{cl}
\opn\height{height}
\opn\reg{reg}
\opn\Reg{Reg}
\opn\pd{pd}
\opn\supp{supp}
\opn{\HS}{HS}
\opn{\Ass}{Ass}
\opn{\Min}{Min}

\def\cI{{\mathcal I}}
\def\cP{{\mathcal P}}
\def\cQ{{\mathcal Q}}

\def\PP{{\mathcal A}}

\hyphenation{bi-ta-bleaux}
\hyphenation{bi-ta-bleau}

\def\addots{\mathinner{\mkern1mu\raise1pt\hbox{.}\mkern2mu\raise4pt\hbox{.}
        \mkern2mu\raise7pt\vbox{\kern7pt\hbox{.}}\mkern1mu}}

\unitlength=0.7cm

%
%
\textwidth=15cm
\textheight=22cm
\topmargin=0.5cm
\oddsidemargin=0.5cm
\evensidemargin=0.5cm

\numberwithin{equation}{section}

\author{Aldo Conca}
\address{ Dipartimento di Matematica, Dipartimento di Eccellenza 2023-2027,
Universit\`a degli Studi di Genova, Italy}
\email{conca@dima.unige.it}
 
\title{A note on the $V$-invariant}

\keywords{Associated primes, v-invariant}

\subjclass[2020]{13A30}
\date{}

\begin{document}

\begin{abstract}
Let $R$ be a finitely generated $\mathbb N$-graded algebra domain over a Noetherian ring and let $I$ be a  homogeneous ideal of $R$. Given $P\in \Ass(R/I)$ one defines the $v$-invariant $v_P(I)$ of $I$ at $P$ as the least $c\in \mathbb N$ such that $P=I:f$ for some  $f\in R_c$.   A classical result of Brodmann \cite{B} asserts that $\Ass(R/I^n)$ is constant for large $n$. So it makes sense to consider a prime ideal $P\in \Ass(R/I^n)$ for all the large $n$ and investigate how $v_P(I^n)$ depends on $n$. We prove that   $v_P(I^n)$  is eventually a  linear  function of $n$. 
When $R$ is the polynomial ring over a field this statement has been proved independently also by   Ficarra and Sgroi in their recent preprint \cite{FS}. 
  \end{abstract}

\maketitle


\section{The $v$-invariant}
Let $R$ be a finitely generated $\NN$-graded algebra domain over a Noetherian ring and let $I$ be a  non-zero  homogeneous ideal of $R$ generated by elements of degree $d_1,\dots,d_c\in \NN$. Recently there has been some interest in the study of an invariant associated to $I$ as follows. 
For $P\in \Ass(R/I)$ the $v$-invariant $v_P(I)$ of $I$ at $P$ is defined as 
$$v_P(I)=\min\{ u : \mbox{ there exist } f\in R_u \mbox{ such that } P=I:f\},$$
  see for example \cite{GRV}.  A classical result of Brodmann \cite{B} asserts that $\Ass(R/I^n)$ is constant for large $n$. 
Denote by $\PP(I)$ the set of the asymptotic associated primes of $I$, i.e. $\PP(I)=\Ass(R/I^n)$ for large $n$.  A quite natural question, discussed by Ficarra and Sgroi in the various arXiv versions of their paper \cite{FS}, is the nature of the asymptotic behaviour of $v_P(I^n)$ as a function on $n\in \NN$ for a prime ideal $P\in \PP(I)$.  The goal of this short note is to prove the following: 

\begin{theorem}
\label{main1}
For all $P\in \PP(I)$ the function $v_P(I^n)$ is eventually linear in $n$ with leading coefficient  in the set $\{d_1,\dots, d_c\}$. 
\end{theorem}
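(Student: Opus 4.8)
The plan is to translate $v_P(I^n)$ into the \emph{initial degree} of a suitable graded module, and then to bundle these modules, as $n$ varies, into a single finitely generated module over the Rees algebra $\cR(I)=\bigoplus_{n\ge 0}I^nt^n$. I regard $\cR(I)$ as bigraded by internal $R$-degree and by the power $n$, so that it is a finitely generated algebra over $R$ (placed in $n$-degree $0$) with generators $x_it$ in bidegree $(d_i,1)$, where $x_1,\dots,x_c$ are generators of $I$.

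First I would record the reformulation. For $f\in R$ the equality $I^n:f=P$ says exactly that the image $\bar f\in R/I^n$ generates a submodule isomorphic to $R/P$, i.e. that $\bar f$ lies in $N_n:=(I^n:P)/I^n=\operatorname{Hom}_R(R/P,R/I^n)$ and has annihilator precisely $P$. Since $P\cdot N_n=0$, the module $N_n$ is graded over the domain $A:=R/P$, and an element has annihilator exactly $P$ if and only if it is not an $A$-torsion element. Writing $\bar N_n:=N_n/t_A(N_n)$ for the maximal $A$-torsion-free quotient and $\alpha(M)$ for the least degree of a nonzero homogeneous element of a graded module $M$ (with $\alpha(0)=+\infty$), I obtain
$$v_P(I^n)=\alpha(\bar N_n).$$
For $P\in\PP(I)$ and $n\gg 0$ we have $P\in\Ass(R/I^n)$, so $\bar N_n\neq 0$ and $\alpha(\bar N_n)$ is a genuine integer.

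The crucial step is finite generation. Fix a nonzero $p\in P$ of degree $e$. Then $(I^n:P)\subseteq(I^n:p)$, and multiplication by $p$ identifies $(I^n:p)$ with $(pR\cap I^n)$ up to a fixed internal degree shift by $e$. By the Artin--Rees lemma the filtration $\{pR\cap I^n\}_n$ on $pR$ is $I$-stable, so $\bigoplus_n(pR\cap I^n)$ is a finitely generated $\cR(I)$-module; hence so is $\bigoplus_n(I^n:p)$. As $\cR(I)$ is Noetherian, the bigraded submodule $\bigoplus_n(I^n:P)$ is finitely generated, and therefore so is $\mathcal N:=\bigoplus_n N_n$. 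The slicewise torsion $\bigoplus_n t_A(N_n)$ is a bigraded $\cR(I)$-submodule of $\mathcal N$ (if $a\in A\setminus\{0\}$ kills $x$ it also kills $\xi x$ for every $\xi\in\cR(I)$), so the quotient $\bar{\mathcal N}:=\bigoplus_n\bar N_n$ is again a finitely generated bigraded $\cR(I)$-module.

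Finally I would prove the general fact that for any finitely generated bigraded $\cR(I)$-module $\mathcal M$ with slices $\mathcal M_n$, the function $n\mapsto\alpha(\mathcal M_n)$ is eventually linear with slope in $\{d_1,\dots,d_c\}$; applying it to $\mathcal M=\bar{\mathcal N}$ finishes the argument. Using a bihomogeneous prime filtration of $\mathcal M$ together with the identity $\alpha(\mathcal M_n)=\min(\alpha(\mathcal M'_n),\alpha(\mathcal M''_n))$ for a short exact sequence $0\to\mathcal M'\to\mathcal M\to\mathcal M''\to 0$, one reduces to $\mathcal M=(\cR(I)/\mathfrak q)(\sigma,\tau)$ for a bihomogeneous prime $\mathfrak q$. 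In the bigraded domain $\cR(I)/\mathfrak q$, set $d^{\ast}=\min\{d_i:x_it\notin\mathfrak q\}$; every element of the $n$-th slice is a combination of length-$n$ monomials in the nonzero images $\overline{x_it}$, none of which has internal degree below $nd^{\ast}$, while a power of $\overline{x_i t}$ with $d_i=d^{\ast}$ is nonzero because the ring is a domain. Hence $\alpha$ of the $n$-th slice equals $nd^{\ast}$ up to the fixed shift, a linear function with slope $d^{\ast}\in\{d_1,\dots,d_c\}$. Taking the minimum over the finitely many filtration quotients gives an eventually linear function whose slope is the least of the relevant $d^{\ast}$, again one of the $d_i$. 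I expect the heart of the proof to be the finite-generation step via Artin--Rees together with the correct torsion bookkeeping; the concluding linearity is then a routine prime-filtration computation, in which the domain argument is exactly what forces the slope to be one of the generator degrees $d_i$ rather than merely their minimum.
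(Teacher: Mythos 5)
Your proposal is correct, and although it shares the paper's overall architecture --- realize $v_P(I^n)$ as the initial degree of the $n$-th slice of a finitely generated bigraded module over $\Rees(I)$, then finish with a prime-filtration argument --- the middle of the argument is genuinely different. The paper isolates the degrees where $P=I^n:f$ occurs via the colon module $(I^n:P)/(I^n:(P+Q^\infty))$, where $Q$ is the product of the asymptotic associated primes strictly containing $P$ (Lemma \ref{main2}); this makes the global object $H=(\cI:\cP)/(\cI:(\cP+\cQ^\infty))$ finitely generated for free, since it is a subquotient of ideals of the Noetherian Rees ring, but it costs Brodmann's stability theorem (to define $Q$ uniformly in $n$) and Ratliff's theorem $I^{n+1}:I=I^n$ (to identify the slices of $H$). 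You instead kill exactly the elements of $N_n=(I^n:P)/I^n$ whose annihilator strictly exceeds $P$ by passing to the torsion-free quotient over the domain $R/P$ --- the same set of elements, described intrinsically --- which lets you dispense with $Q$, with Brodmann and with Ratliff entirely; the price is that finite generation of $\bigoplus_n(I^n:P)$ is no longer automatic, and your Artin--Rees argument via the degree-$e$ isomorphism $I^n:p\iso pR\cap I^n$ for a fixed homogeneous $0\neq p\in P$ is the right way to pay it (it uses that $R$ is a domain, which the paper assumes, and the slicewise torsion is indeed an $\Rees(I)$-stable bigraded submodule, as you note). Your concluding step writes out the prime-filtration proof that $n\mapsto\min\{w:U_{(w,n)}\neq 0\}$ is eventually linear with slope among the $d_i$, which the paper delegates to \cite[3.3]{BCV} and \cite[8.3.4]{BCRV}; your observation that length-$n$ monomials in the surviving generators $\overline{x_it}$ have internal degree at least $nd^{\ast}$ uses the $\NN$-grading of $R$, which is precisely the point the paper flags as the reason the ``min'' version survives over an $\NN$-graded base while the ``max'' version does not. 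In short: same skeleton, but your reduction trades Brodmann plus Ratliff for Artin--Rees plus torsion bookkeeping, and both trades are sound.
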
 
  
 For the proof of  Theorem \ref{main1} we need the following characterization of $v_P(I)$ inspired by  \cite[3.2]{GRV}. 
  
  \begin{lemma}
\label{main2}
Given $P\in \Ass(R/I)$ let $X_P=\{ P_1\in \Ass(R/I) : P\subsetneq P_1\}$. 
Let $Q$ be the product of  the elements in $X_P$  with the convention that  $Q=R$ if $X_P$ is empty.  Then $v_P(I)$ is the smallest $w\in \NN$ such that the graded $R$-module $(I:P)/(I:(P+Q^\infty))$ does not vanish in degree $w$. 
  \end{lemma}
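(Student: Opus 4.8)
The plan is to reinterpret everything in terms of torsion over the domain $D=R/P$. Since $R$ is Noetherian and $I,P$ are homogeneous ($P$ being an associated prime of a graded module is automatically homogeneous), the module $M=(I:P)/I=(0:_{R/I}P)$ is a finitely generated graded $D$-module. For a homogeneous $f\in R_u$ one has $P\subseteq I:f$ exactly when $f\in I:P$, and in that case $\Ann_D(f+I)=(I:f)/P$, so $I:f=P$ holds precisely when $f+I$ is a nonzero homogeneous element of $M$ with trivial $D$-annihilator, i.e. a non-torsion element. Hence the first step is to record the clean reformulation
$$v_P(I)=\min\{w\in\NN : M_w\not\subseteq T(M)_w\}=\min\{w\in\NN : (M/T(M))_w\neq 0\},$$
where $T(M)$ denotes the $D$-torsion submodule of $M$; here one uses that, in a degree where $M_w\neq T(M)_w$, any homogeneous lift of a nonzero class of $(M/T(M))_w$ is a non-torsion element and thus witnesses $I:f=P$.

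The heart of the argument is then to identify $T(M)$ with $(0:_M Q^\infty)$. Since $M$ is annihilated by $P$ and embeds in $R/I$, we have $\Ass_R(M)\subseteq\{P\}\cup X_P$. As $T(M)$ is a torsion $D$-module, $P\notin\Ass_R(T(M))$, so $\Ass_R(T(M))\subseteq X_P$ and therefore $\supp_R(T(M))\subseteq\bigcup_{P_1\in X_P}V(P_1)=V(Q)$. For $m\in T(M)$ this forces $Q\subseteq\sqrt{\Ann_R(m)}$, whence $Q^n m=0$ for some $n$ and $m\in(0:_M Q^\infty)$. Conversely, because each $P_1\in X_P$ strictly contains $P$ one can pick $s_1\in P_1\setminus P$; their product lies in $Q\setminus P$, so $Q\not\subseteq P$, and any $m$ with $Q^n m=0$ is killed by a nonzero element of $D$, hence is torsion. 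This gives $T(M)=(0:_M Q^\infty)$. (When $X_P=\varnothing$ and $Q=R$ this reads $T(M)=0$, consistent with $M$ being $D$-torsion-free.)

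Finally I would translate this back to ideals. Unwinding $(0:_M Q^\infty)$ for $M=(I:P)/I$ gives the submodule $\bigl((I:P)\cap(I:Q^\infty)\bigr)/I$, and since $I:(P+Q^n)=(I:P)\cap(I:Q^n)$ one obtains $(I:P)\cap(I:Q^\infty)=\bigcup_n I:(P+Q^n)=I:(P+Q^\infty)$. Therefore $M/T(M)=(I:P)/(I:(P+Q^\infty))$, and combined with the reformulation of the first paragraph this yields exactly the claimed description of $v_P(I)$. The main obstacle, and the only place where genuine commutative-algebra input is needed, is the torsion identification $T(M)=(0:_M Q^\infty)$: the inclusion $\subseteq$ rests on controlling $\Ass_R(T(M))\subseteq X_P$ and passing to supports, while $\supseteq$ rests on the elementary but essential observation that the product $Q$ of the larger primes is not contained in $P$; the remaining steps are formal manipulations of colon ideals and the definition of the torsion submodule.
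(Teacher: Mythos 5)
Your proof is correct, and it reorganizes the argument in a recognizably different way from the paper, so a comparison is worth recording. The paper works element by element with colon ideals: given $f$ with $I:f=P$ it shows $f\notin I:(P+Q^\infty)$ directly from $Q\not\subseteq P$, and given $g\in (I:P)\setminus (I:(P+Q^\infty))$ it shows $I:g=P$ by embedding $R/(I:g)$ into $R/I$ via multiplication by $g$, deducing $\Ass(R/(I:g))\subseteq X_P$, and concluding $Q^m g\subseteq I$. You instead prove one structural statement --- the graded identification $M/T(M)\cong (I:P)/(I:(P+Q^\infty))$ for $M=(0:_{R/I}P)$ viewed over the domain $D=R/P$ --- after which the lemma reads: $v_P(I)$ is the initial degree of the torsion-free quotient of $M$. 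The two inclusions of your key step $T(M)=(0:_M Q^\infty)$ are exactly the paper's two halves in disguise: $\supseteq$ rests on $Q\not\subseteq P$ (a product of primes each strictly containing the prime $P$ cannot lie in $P$), and $\subseteq$ rests on $\Ass_R(T(M))\subseteq X_P$ plus passage to supports, which globalizes the paper's per-element computation $Q_1^m\subseteq I:g$. Your version buys a cleaner conceptual reading of $v_P(I)$ and an isomorphism of graded modules that is slightly stronger than the degreewise nonvanishing actually needed; the small price is some bookkeeping you should make explicit, namely that $T(M)$ is a graded submodule (immediate a posteriori from $T(M)=(0:_M Q^\infty)$ and the homogeneity of $Q$) and that every nonzero class in $M_w$ lifts to some $f\in (I:P)_w\subseteq R_w$ because $I:P$ is a graded ideal. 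With those two remarks added, all steps check out.
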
 
  \begin{proof} 
  Let $v=v_P(I)$ and $w$ be the smallest natural number  such that $(I:P)/(I:(P+Q^\infty))$ does not vanish in degree $w$. Let $f\in R_v$ such that $I:f=P$. In particular $f\in I:P$. We show that  $f\not\in I:(P+Q^\infty)$ by contradiction. 
 If  $f\in  I:(P+Q^\infty)$ then  $fQ^m\subseteq  I$ for some $m\in \NN$ which implies $Q\subseteq  P$. Since $Q$ is the product of the ideals in $X_P$ (or $R$)  the inclusion   $Q\subseteq  P$ implies that  there exists $P_1\in X_P$  such that $P_1\subseteq P$, a contradiction because, by definition, $P$ is strictly contained in  $P_1$.  Hence $f\not\in I:(P+Q^\infty)$ and $v\geq w$. 
 
 Let now $g\in R_w$ such that  $g\in I:P$ and $g\not\in I:(P+Q^\infty)$. We have $P\subseteq I:g$. We claim that $P=I:g$ and this implies  $v\leq w$. We prove the claim by contradiction. Assume that $P\subsetneq I:g$. Observe  that the $I:g$ is a proper ideal since $g\not\in I$.  The multiplication with $g$   embeds $R/(I:g)$ in $R/I$. Hence $\Ass(R/I:g)\subseteq \Ass(R/I)$.  Then for each $P_1\in \Ass(R/I:g)$ we have 
 $$P\subsetneq I:g\subseteq  P_1\in \Ass(R/I:g)\subseteq \Ass(R/I)$$
 so that   $P_1\in X_P$. Hence $ \Ass(R/I:g) \subseteq X_P$.  Let $Q_1$ be the product of the primes in $ \Ass(R/I:g)$. For some large $m$ we have  $Q_1^m\subseteq I:g$, that is, $Q_1^mg\subseteq I$. Hence $Q^mg\subseteq Q_1^mg\subseteq I$, 
i.e.~$g\in I:(P+Q^\infty)$ contradicting the assumption. 
  \end{proof} 
  
  Now we are ready to prove Theorem \ref{main1} 
  
  \begin{proof}[Proof of Theorem \ref{main1}] Let $P\in \PP(I)$ and let $Q$ be the product of the ideals in $\{ P_1\in \PP(I)  : P\subsetneq P_1\}$. Consider the Rees ring $\Rees=\Rees(I)$ with its natural bigraded structure, i.e.~$\Rees_{(w,n)}=(I^n)_w$ for all $(w,n)\in \NN^2$.  Consider the extended ideals $\cI=I\Rees$ and $\cP=P\Rees$ and $\cQ=Q\Rees$ . Now set 
  $$A=\cI :_{\Rees}  \cP \quad \mbox{ and } \quad B=\cI:_{\Rees}  (\cP+\cQ^\infty).$$
   Set $H=A/B$. By construction $H$ is a finitely generated bigraded $\Rees$-module whose degree $(w,n)$ component equals the degree $w$ component of  
   $$( (I^{n+1}:P) \cap I^n)/ ( I^{n+1}:(P+Q^\infty)  \cap I^n ).$$
  By \cite[4.2]{R} for large $n$  one has  $I^{n+1}:I=I^n$ and so $I^{n+1}:P$ is contained in $I^n$. Therefore, for large $n$, the degree $(w,n)$ component of $H$ equals the degree $w$ component of  
  $$(I^{n+1}:P)/(I^{n+1}:(P+Q^\infty)).$$
   Hence by \ref{main2} we have $v_P(I^{n+1})=\min\{ w : H_{(w,n)}\neq 0\}$ for large $n$. 
  
  A simple variation of the arguments given in the proof of \cite[3.3]{BCV} or \cite[8.3.4]{BCRV} actually shows that the function $n\to \min\{ w : U_{(w,n)}\neq 0\}$ is eventually linear in $n$ with leading coefficient among the $d_i$'s for every  finitely  generated bigraded $\Rees$-module $U$. 
  
  Indeed, $\Rees$ can be presented as a quotient of polynomial extension $A=R[y_1,\dots, y_d]$  of $R$ 
  bigraded by $\deg y_i=(d_i,1)$  for all $i$ and $\deg x=(s, 0)$ for every $x\in R_s$. 
In \cite[3.3]{BCV} and  \cite[8.3.4]{BCRV} the base ring $A_0$ (which is $R$ is our context) is concentrated in degree $0$ so that the function  $n\to \max\{ w : U_{(w,n)}\neq 0\}$ is well defined for all finitely generated bigraded $A$-module $U$.  As we  deal with the function $n\to \min\{ w : U_{(w,n)}\neq 0\}$ the base ring can  be $\NN$-graded and rest of the arguments of   \cite[3.3]{BCV} and  \cite[8.3.4]{BCRV} apply to this context verbatim. 
  \end{proof} 
  
\begin{remark} If $J$ is a homogeneous reduction of $I$ then $\Rees(I)$ is finitely generated as an $\Rees(J)$-module, see \cite[Chap.8]{HS}.    Hence the module $H$ in the proof of Theorem \ref{main1} is finitely generated over $\Rees(J)$ and the leading coefficient of the function $n\to v_P(I^n)$ is equal to the degree of a generator of $J$. 
 \end{remark}

 \noindent {\bf Acknowledgements.} The author was supported  by the MIUR Excellence Department Project awarded to the Dept.~of Mathematics, Univ.~of Genova, CUP D33C23001110001, by PRIN 2020355B8Y  ``Squarefree Gr\"obner degenerations, special varieties and related topics'' and by  GNSAGA-INdAM.


\begin{thebibliography}{99.}

\bibitem{B} 
M. Brodmann, 
{\em Asymptotic stability of $\Ass(M/IM)$}, 
Proc. Am. Math. Soc., 74 (1979), 16--18.

\bibitem{BCV} 
W.Bruns, A.Conca, M.Varbaro,
{\em Castelnuovo--Mumford Regularity and Powers}, 
 Commutative algebra, 147--158, Springer, Cham, (2021). 
 
 \bibitem{BCRV} 
W.Bruns, A.Conca, C.Raicu, M.Varbaro,
{\em Determinants, Gr\"obner bases and cohomology},
 Springer Monographs in Mathematics. Springer, Cham, (2022).

 \bibitem{FS} 
 Antonino Ficarra, Emanuele Sgroi, 
 {\em Asymptotic behaviour of the $\text{v}$-number of homogeneous ideals},
https://arxiv.org/abs/2306.14243

\bibitem{GRV} 
 G. Grisalde, E. Reyes, R.H. Villarreal. 
{\em  Induced matchings and the v-number of graded ideals.}
Mathematics, 9 (22), 2021.
 
 \bibitem{HS} 
 C.~Huneke, I.~Swanson, {\em Integral closure of ideals, rings, and modules,} London Mathematical Society Lecture Note Series, 336. Cambridge University Press, Cambridge, 2006.

\bibitem{R}  
 L. J. Ratliff Jr., 
 {\em On prime divisors of $I^n$, n large, }
 Mich. Math. J. 23 (1976), 337--352.
 
 
 
 \end{thebibliography}
\end{document}